\theoremstyle{plain}
\newtheorem{theorem}{Theorem}[section]
\newtheorem{lemma}[theorem]{Lemma}
\newtheorem{corollary}[theorem]{Corollary}
\newtheorem{definition}[theorem]{Definition}
\newcommand{\toto}{\leftrightarrow}
\newcommand{\To}{\Rightarrow}
\let\Pi\undefined
\DeclareMathSymbol{\Pi}{\mathalpha}{operators}{"05}
\newcommand{\alg}[1]{{\ensuremath{\boldsymbol #1}}}
\newcommand{\Sent}{\mbox{\rm Sent}_\Gamma}
\newcommand{\MTL}{{\ensuremath{\mathrm{MTL}}}}
\newcommand{\MV}{{\ensuremath{\mathrm{MV}}}}
\newcommand{\model}[1]{{\mathbf{{#1}}}}
\newcommand{\K}{{\mathbb{K}}}
\begin{document}

\title{
\bf\Large Solution of some problems in \\the arithmetical complexity of first-order fuzzy logics}

\author{
  \addtocounter{footnote}{1}
  F\'elix Bou \footnote{
  The authors acknowledge partial support of Eurocores (LOMOREVI
  Eurocores Project FP006/FFI2008-03126-E/FILO), Spanish Ministry of
  Education and Science (project TASSAT TIN2010-20967-C04-01),
  Catalan Government (2009SGR-1433/4), and the FP7-PEOPLE-2009-IRSES project MaToMUVI (PIRSES-GA-2009- 247584). Carles Noguera also acknowledges
  support from the research contract ``Juan de la Cierva''  JCI-2009-05453.
  }\\
  University of Barcelona\\
  bou@ub.edu
  \and
  Carles Noguera $^\fnsymbol{footnote}$\\
  IIIA -- CSIC\\
  cnoguera@iiia.csic.es
}


\date{}
\maketitle

\begin{abstract}
  This short paper addresses the open problems left in the
  paper~\cite{MoNo10}. Besides giving solutions to these two problems,
  some clarification concerning the role of the full vocabulary
  (including functional symbols) in the proofs there given is also
  discussed.
  
  \medskip

  {\noindent \small{\bf Keywords}: Arithmetical complexity, Core fuzzy
  logics, Finite-chain semantics, First-order predicate fuzzy logics,
  Mathematical Fuzzy Logic, Rational semantics, Standard semantics.}
\end{abstract}

The paper~\cite{MoNo10} obtains general results providing positions in
the arithmetical hierarchy for first-order fuzzy logics.
It also formulates two problems that are left without solution:


\begin{description}
  \item \textsc{Open Problem 1} (see~\cite[p.~408]{MoNo10}): 
    ``Show that for every class $\K$ of chains, the set ${\rm
    SAT}_\mathrm{pos}(\K)$ is $\Pi_1$-hard.''

  \item \textsc{Open Problem 2} (see~\cite[p.~421]{MoNo10}):
    ``Is it true that ${\rm finTAUT}_\mathrm{pos}({\L\forall}) \subseteq {\rm
    stTAUT}_\mathrm{pos}({\L\forall})$? This would imply: ${\rm
    stTAUT}_\mathrm{pos}({\L\forall})  = {\rm canratTAUT}_\mathrm{pos}({\L\forall}) =
    {\rm finTAUT}_\mathrm{pos}({\L\forall})$ and ${\rm
    stSAT}_{1}({\L\forall})  = {\rm canratSAT}_{1}({\L\forall}) =
    {\rm finSAT}_{1}({\L\forall})$.''
\end{description}
The aim of this short paper is to provide answers for these two open
problems. We point out that the answers (including the proofs) given
here will be included in the forthcoming \cite{HaMoNo11}, which, among
other stuff, provides an updated version of the kind of results
studied in \cite{MoNo10}.

\paragraph{Notation and Background.}
The notation used in this paper corresponds to the one introduced in
\cite{MoNo10}, so we advise the reader to get acquaintance with the
terminology there used before reading this paper. 
It is worth noticing two facts concerning these issues. The first one is
that although in that paper the authors consider both the first-order
language with and without $\Delta$, since here our concern only involves
the previous two open problems we can always assume that we are in the
first-order language without $\Delta$. 
It should also be emphasized that \cite{MoNo10} 
only considered the \emph{full vocabulary}, i.e., the first-order vocabulary which includes
a countable number of constant symbols, a countable number of predicate
symbols and a countable number of functional symbols. In the classical
setting it is obvious that there is no distinction, from an expressive
power point of view, between considering the full vocabulary or the full
predicate one (i.e., the language with a
countable number of constant symbols and a countable number of predicate
symbols, with no functional symbols); but this is not at all obvious
in the fuzzy setting. Thus, the design choice of the full vocabulary
in~\cite{MoNo10} is, for the sake of generality, a drawback.

\paragraph{Structure of the paper.}
The first open problem is answered positively in Section~\ref{sec:problem1}.
The proof here given follows the same idea than the one given for proving
$\Sigma_1$-hardness of positive tautologies
(see~\cite[Theorem~3.15]{MoNo10}). The only difference is that the role
there played by the algebraic term $2x$ (there defined as
$\neg (\neg x \& \neg x)$) is replaced here by the much more common term
$x^2$ (as usual defined by $x \& x$); so in some sense this proof can be
considered simpler than the one given for positive tautologies in
\cite{MoNo10}.
In the last part of this section we point out that the proof here given,
and also the proof given in \cite[Theorem~3.15]{MoNo10}, rely on the
crucial fact that the vocabulary considered is the full one (including
functional symbols). Thus, although the statement
\begin{quotation}
  ``This theorem, in particular, solves a couple of open problems recently
  proposed by H\'ajek in \cite{Ha09b}; namely given a set $\K$ of standard
  BL-chains such that its corresponding logic ${\rm L}_{\K}\forall$ is
  recursively axiomatizable show that ${\rm genTAUT}_{1} ({\rm
  L}_{\K}\forall)$ and ${\rm genTAUT}_\mathrm{pos} ({\rm
  L}_{\K}\forall)$ are $\Sigma_1$-hard.''
  \hfill \cite[p.~409]{MoNo10}
\end{quotation}
is right,\footnote{It is worth emphasizing that this quotation refers to
the full vocabulary
} H\'ajek's problem remains open for the full predicate
vocabulary (and also for other vocabularies).
\medskip

The second open problem is considered in Section~\ref{sec:problem2}.
This problem is narrower than the other in the sense that it only
involves the {\L}ukasiewicz case. In this section the authors notice
that the second open problem was indeed answered negatively by
\textsc{P. H\'ajek} in \cite[Lemma~4]{Ha02a}.


\section{Positive Satisfiability is $\Pi_1$-hard}
\label{sec:problem1}

In this section we answer the first open problem positively. 
For the rest of the section, let us fix $\K$ a non-trivial class of
\MTL-chains, i.e., $\K$ contains some \MTL-chain with at least two
elements. Let us remind that the set ${\rm SAT}_\mathrm{pos}(\K)$ of positive
satisfiable (first-order) sentences is defined as
\begin{center} ${\rm SAT}_\mathrm{pos}(\mathbb{K}) \coloneqq \{\varphi \in
  \Sent \mid$ there exist $\alg{A} \in \mathbb{K}$ and an
  $\alg{A}$-structure $\model{M}$ such that
  $\|\varphi\|^\alg{A}_\model{M} > \overline{0}^\alg{A}\}$.
\end{center} 
For the purpose of this section we next introduce two auxiliary sets of
sentences:
\begin{itemize}
  \item ${\rm TAUT}_0(\mathbb{K}) \coloneqq \{\varphi \in \Sent \mid$
    for every $\alg{A} \in \mathbb{K}$ and every $\alg{A}$-structure
    $\model{M}$, $\|\varphi\|^\alg{A}_\model{M} =
    \overline{0}^\alg{A}\}$.

  \item ${\rm TAUT}_{<1}(\mathbb{K}) \coloneqq \{\varphi \in \Sent \mid$
    for every $\alg{A} \in \mathbb{K}$ and every $\alg{A}$-structure
    $\model{M}$, $\|\varphi\|^\alg{A}_\model{M} <
    \overline{1}^\alg{A}\}$.
\end{itemize}

The following step in our proof is the following lemmata and their
consequences. We remind again the reader that this proof
is very close to the one given in \cite[Theorem~3.15]{MoNo10}.

\begin{lemma}
  \label{lem:equation}
  The equation $x^2 \land (\neg x)^2 \approx \overline{0}$ holds in all \MTL-algebras.
\end{lemma}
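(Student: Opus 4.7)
The plan is to reduce the claim to \MTL-chains and then perform a short case analysis. By the well-known subdirect representation theorem for \MTL-algebras (Jenei--Montagna), every \MTL-algebra embeds as a subdirect product of \MTL-chains, so it suffices to verify the equation on an arbitrary chain. I will additionally use the identity $x \& \neg x = \overline{0}$, valid in every bounded integral commutative residuated lattice: by residuation $x \& (x \to \overline{0}) \leq \overline{0}$, and since $\overline{0}$ is the bottom this is an equality.

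Now fix an \MTL-chain and an element $x$. By linearity, either $x \leq \neg x$ or $\neg x \leq x$. In the first case, monotonicity of $\&$ yields $x^2 = x \& x \leq x \& \neg x = \overline{0}$, so $x^2 \land (\neg x)^2 \leq x^2 = \overline{0}$. The second case is entirely symmetric and delivers $(\neg x)^2 = \overline{0}$, which again forces $x^2 \land (\neg x)^2 = \overline{0}$. Either way the equation holds in the chain, and the proof is finished.

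I do not expect any genuine obstacle: the whole argument rests on the subdirect representation together with the elementary identity $x \& \neg x = \overline{0}$. If one preferred to avoid chains altogether, one could instead use prelinearity $(x \to \neg x) \vee (\neg x \to x) = \overline{1}$ together with the distributivity of $\&$ over $\vee$, but the chain-based route is by far the cleanest.
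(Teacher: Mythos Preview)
Your proof is correct and follows essentially the same route as the paper: both reduce to \MTL-chains via subdirect representation, and both exploit linearity together with $x \,\&\, \neg x = \overline{0}$. The paper simply compresses your case analysis into the single line $a^2 \land (\neg a)^2 = (a \land \neg a)^2 \leq a * \neg a = \overline{0}$, where the first equality uses that squaring is monotone on a chain (so the square of the minimum is the minimum of the squares) and the inequality uses $a \land \neg a \leq a$ and $a \land \neg a \leq \neg a$.
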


\begin{proof}
  It is obvious that \MTL-chains satisfy that for every element $a$,
  it holds that
    $a^2 \land (\neg a)^2 = (a \land \neg a)^2 \leq a \ast \neg a = 0$.
\end{proof}

\begin{corollary}
  Let $\K$ be a class of \MTL-chains. Then, for every
  sentence $\varphi$, it holds that $\varphi^2 \land (\neg \varphi)^2
  \in {\rm TAUT}_0(\mathbb{K})$.
\end{corollary}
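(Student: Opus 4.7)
The plan is to reduce the corollary to a direct application of Lemma~\ref{lem:equation}. The key observation is that, for a sentence $\varphi$ (which has no free variables), its truth value $a \coloneqq \|\varphi\|^\alg{A}_\model{M}$ is a single, well-defined element of the carrier of $\alg{A}$, independent of any variable assignment. Hence evaluating the compound formula $\varphi^2 \land (\neg \varphi)^2$ amounts to computing $a^2 \wedge (\neg a)^2$ in $\alg{A}$.

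More precisely, I would first recall from the Tarskian semantics used in \cite{MoNo10} that the evaluation $\|\cdot\|^\alg{A}_\model{M}$ commutes with the propositional connectives: $\|\psi \& \chi\|^\alg{A}_\model{M} = \|\psi\|^\alg{A}_\model{M} \ast \|\chi\|^\alg{A}_\model{M}$, $\|\neg \psi\|^\alg{A}_\model{M} = \neg \|\psi\|^\alg{A}_\model{M}$, and $\|\psi \land \chi\|^\alg{A}_\model{M} = \|\psi\|^\alg{A}_\model{M} \land \|\chi\|^\alg{A}_\model{M}$. Applied to $\varphi^2 \land (\neg \varphi)^2$, with $a = \|\varphi\|^\alg{A}_\model{M}$, this yields
\[
  \|\varphi^2 \land (\neg \varphi)^2\|^\alg{A}_\model{M} = a^2 \land (\neg a)^2.
\]

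Now, since $\alg{A}$ is an \MTL-chain (hence in particular an \MTL-algebra), Lemma~\ref{lem:equation} tells us that the equation $x^2 \land (\neg x)^2 \approx \overline{0}$ is valid in $\alg{A}$, so $a^2 \land (\neg a)^2 = \overline{0}^\alg{A}$. Because $\alg{A} \in \K$ and the $\alg{A}$-structure $\model{M}$ were arbitrary, this shows $\varphi^2 \land (\neg \varphi)^2 \in {\rm TAUT}_0(\K)$, as required. There is no real obstacle here: the whole content of the corollary is that the algebraic identity from Lemma~\ref{lem:equation} lifts from elements of the algebra to truth values of sentences, which is immediate from the compositional nature of the semantics.
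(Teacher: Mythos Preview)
Your proposal is correct and is exactly the intended argument: the paper states the corollary immediately after Lemma~\ref{lem:equation} with no separate proof, treating it as the obvious consequence of the compositional semantics together with that lemma, which is precisely what you spell out.
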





\begin{definition}
  For every formula $\varphi$ (in the classical setting) we define the
  formula $\varphi^{\star}$ (in the fuzzy setting) through the following
  clauses:
  \begin{itemize}
    \item if $\varphi$ is a literal (i.e., either an atomic formula or the
      negation of an atomic formula), then $\varphi^{\star}\coloneqq
      \varphi^2$ (i.e., $\varphi \& \varphi$).
    \item $(\varphi_1 \land \varphi_2)^{\star}\coloneqq
      \varphi_1^{\star} \land \varphi_2^{\star}$,
    \item $(\varphi_1 \lor \varphi_2)^{\star}\coloneqq
      \varphi_1^{\star} \lor \varphi_2^{\star}$,
    \item $(\forall x \varphi)^{\star}\coloneqq \forall x
      (\varphi^{\star})$,
    \item $(\exists x \varphi)^{\star}\coloneqq \exists x
      (\varphi^{\star})$.
  \end{itemize} 
\end{definition}

It is obvious from the previous definition that $\varphi$ and
$\varphi^{\star}$ always have same free variables. In particular,  if
$\varphi$ is a sentence, then $\varphi^*$ is also a sentence.

\begin{lemma}
  \label{l:propositional-lemma}
  Let $\K$ be a non-trivial class of \MTL-chains, and let $\varphi$ be a
  lattice combination of literals.  The following are equivalent:
  \begin{itemize}
    \item[(1)] $\varphi$ is a classical propositional contradiction.
    \item[(2)] $\varphi^\star \in {\rm TAUT}_0(\mathbb{K})$.
  \end{itemize}
\end{lemma}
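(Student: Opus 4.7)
The plan is to prove the two implications separately: $(1)\Rightarrow(2)$ uses the Corollary following Lemma~\ref{lem:equation} together with the distributivity of the lattice reduct of MTL-algebras, and $(2)\Rightarrow(1)$ uses a Boolean-like evaluation.

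For $(2)\Rightarrow(1)$ I would argue by contrapositive. Suppose $\varphi$ is not a classical propositional contradiction and fix a classical two-valued valuation on its atomic subformulas that makes $\varphi$ true. By the standing assumption on~$\mathbb{K}$ there is a nontrivial $\mathbf{A}\in\mathbb{K}$, so $\overline{0}^{\mathbf{A}}<\overline{1}^{\mathbf{A}}$. I would construct an $\mathbf{A}$-structure $\mathbf{M}$ whose predicate interpretations force each atomic subformula of $\varphi$ to take the value $\overline{1}^{\mathbf{A}}$ or $\overline{0}^{\mathbf{A}}$ in accordance with the classical valuation. In any MTL-chain one has $\neg\overline{1}=\overline{0}$, $\neg\overline{0}=\overline{1}$, both values are idempotent under~$\&$, and the operations $\land,\lor$ restricted to $\{\overline{0}^{\mathbf{A}},\overline{1}^{\mathbf{A}}\}$ agree with classical conjunction and disjunction. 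Hence the squarings introduced by the $\star$-translation have no effect on this evaluation and $\varphi^{\star}$ computes to $\overline{1}^{\mathbf{A}}>\overline{0}^{\mathbf{A}}$, so $\varphi^{\star}\notin\mathrm{TAUT}_0(\mathbb{K})$.

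For $(1)\Rightarrow(2)$ I would use that the lattice reduct of every MTL-algebra is distributive. Accordingly, in every $\mathbf{A}\in\mathbb{K}$ and every $\mathbf{A}$-structure $\mathbf{M}$ the value of $\varphi^{\star}$ coincides with the value of a disjunctive normal form $\bigvee_i \bigwedge_j \ell_{ij}^{\star}$, where $\bigvee_i \bigwedge_j \ell_{ij}$ is a DNF of $\varphi$. Since $\varphi$ is a classical contradiction, each conjunct $\bigwedge_j \ell_{ij}$ must itself be classically unsatisfiable, and therefore must contain some atomic subformula $p$ together with its negation $\neg p$. The Corollary following Lemma~\ref{lem:equation} applied to this $p$ then yields $p^{\star}\land(\neg p)^{\star}=\overline{0}^{\mathbf{A}}$ in every $\mathbf{A}$-structure, bounding the conjunct above by $\overline{0}^{\mathbf{A}}$; the whole disjunction therefore evaluates to $\overline{0}^{\mathbf{A}}$, giving $\varphi^{\star}\in\mathrm{TAUT}_0(\mathbb{K})$.

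No substantive obstacle is anticipated. The points needing care are (i) the justification that the rewriting to DNF preserves the semantic value in every MTL-chain, which follows from distributivity of the lattice reduct, and (ii) the precise reading of ``classical propositional contradiction'' for the first-order sentence~$\varphi$: its distinct atomic subformulas are viewed as independent propositional variables, so that contradictoriness is equivalent to every conjunct of a DNF of $\varphi$ containing a complementary literal pair.
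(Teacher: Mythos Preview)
Your proposal is correct and follows essentially the same approach as the paper's own proof: both directions are argued in the same way, with $(1)\Rightarrow(2)$ via distributivity to DNF and the identity $p^2\land(\neg p)^2=\overline{0}$ from Lemma~\ref{lem:equation}, and $(2)\Rightarrow(1)$ by contraposition using a Boolean evaluation transported into an arbitrary $\alg{A}\in\K$.
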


\begin{proof}
First of all we show $(1) \To (2)$. By distributivity, $\varphi$ can be
equivalently written as $\bigvee_{i=1}^n \bigwedge_{j=1}^{n_i}
\alpha_{i,j}$, where $\alpha_{i,j}$ are literals. Thus, $\varphi$ is a
classical contradiction iff for every $i \in \{1, \ldots, n\}$,
$\bigwedge_{j=1}^{n_i} \alpha_{i,j}$ is a classical contradiction.
Therefore, for every $i \in \{1, \ldots, n\}$ there are $j_1,j_2 \in
\{1, \ldots, n_i\}$ such that $\alpha_{i,j_1} = \neg \alpha_{i,j_2}$.
Hence, $\alpha_{i,j_1}^2 \land \alpha_{i,j_2}^2$ belongs to ${\rm
TAUT}_0(\mathbb{K})$ by Lemma~\ref{lem:equation}. Since this formula is
implied by $\bigwedge_{j=1}^{n_i}\alpha_{i,j}^2$, we have that
$\varphi^\star$ also belongs to ${\rm TAUT}_0(\mathbb{K})$.

$(2) \To (1)$ can be easily proved by contraposition. If $\varphi$ is
not a classical propositional contradiction, then there is an evaluation
$e$ on $\alg{B}_2$ such that $e(\varphi) = 1$. Since $\varphi^\star$
and $\varphi$ are equivalent in classical logic, we also have
$e(\varphi^\star) = 1$. Now, given any $\alg{A} \in \K$, it is clear
that $e$ can also be seen as an evaluation on $\alg{A}$ and
$e(\varphi^\star) = \overline{1}$.
\end{proof}

\begin{lemma}[Dual Herbrand's Theorem]
A purely universal sentence $\forall x_1 \ldots \forall x_n \ \psi(x_1,
\ldots, x_n)$ is a classical contradiction if, and only if, there exists
$m$ and closed terms $\{t_1^i, \ldots, t_n^i \mid i = 1, \ldots m\}$
such that $\bigwedge_{i=1}^m \psi(t_1^i, \ldots, t_n^i)$ is a classical
propositional contradiction.
\end{lemma}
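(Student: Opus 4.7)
The plan is to deduce this statement from the classical (non-dual) Herbrand Theorem by dualizing via negation. Recall that Herbrand's Theorem in its standard form says that a purely existential sentence $\exists x_1 \ldots \exists x_n\, \chi(x_1,\ldots,x_n)$ (with $\chi$ quantifier-free) is a classical tautology if, and only if, there are finitely many tuples of closed terms $(t_1^i,\ldots,t_n^i)_{i=1}^m$ such that $\bigvee_{i=1}^m \chi(t_1^i,\ldots,t_n^i)$ is a classical propositional tautology.

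First I would observe that $\forall x_1 \ldots \forall x_n\, \psi(x_1,\ldots,x_n)$ is a classical contradiction if, and only if, its negation $\exists x_1 \ldots \exists x_n\, \neg\psi(x_1,\ldots,x_n)$ is a classical tautology. Since $\neg\psi$ is again quantifier-free, I can apply the classical Herbrand Theorem to this existential sentence, obtaining that the latter holds if, and only if, there exist $m$ and closed terms $\{t_1^i,\ldots,t_n^i \mid i=1,\ldots,m\}$ such that $\bigvee_{i=1}^m \neg\psi(t_1^i,\ldots,t_n^i)$ is a classical propositional tautology. Then, by De Morgan in classical propositional logic, this disjunction is a tautology exactly when $\bigwedge_{i=1}^m \psi(t_1^i,\ldots,t_n^i)$ is a classical propositional contradiction, which is the desired conclusion.

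The only subtle point in this reduction is to make sure that the underlying first-order vocabulary contains at least one closed term, so that the Herbrand universe is non-empty; in the setting of this paper the full vocabulary is assumed, so constant symbols are available and this is automatic. The rest is a routine combination of negation and De Morgan duality, so I expect no serious obstacle: the only work is checking that the appeal to the classical Herbrand Theorem is valid in the intended vocabulary, and that the syntactic translation between $\exists\vec{x}\,\neg\psi$ and $\forall\vec{x}\,\psi$ is carried out at the level of closed instances without disturbing the closed terms $t_j^i$.
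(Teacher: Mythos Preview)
Your proof is correct and follows essentially the same route as the paper: negate to pass from the universal contradiction to an existential tautology, apply the classical Herbrand Theorem, and then use De~Morgan to turn the resulting disjunction of negated instances into a conjunction of instances. The paper presents exactly this chain of equivalences (contradiction $\Leftrightarrow$ negation is a tautology $\Leftrightarrow$ existential form is a tautology $\Leftrightarrow$ Herbrand disjunction is a tautology $\Leftrightarrow$ dual conjunction is a contradiction), so there is nothing to add.
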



\begin{proof}
  We notice that each one of the following statements is equivalent to
  the others.
  \begin{enumerate}
    \item $\forall x_1 \ldots \forall x_n \ \psi(x_1, \ldots, x_n)$ is a
      classical contradiction. 
    \item $\neg \forall x_1 \ldots \forall x_n \ \psi(x_1, \ldots, x_n)$ is a
      classical tautology.
    \item $\exists x_1 \ldots \exists x_n \ \neg\psi(x_1, \ldots, x_n)$ is a
      classical tautology.
    \item There are closed terms $\{t_1^i, \ldots, t_n^i \mid i = 1,
      \ldots m\}$ such that $\bigvee_{i=1}^m \neg \psi(t_1^i, \ldots,
      t_n^i)$ is a classical propositional tautology. 
    \item There are closed terms $\{t_1^i, \ldots, t_n^i \mid i = 1,
      \ldots m\}$ such that $\bigwedge_{i=1}^m \psi(t_1^i, \ldots,
      t_n^i)$ is a classical propositional contradiction.
  \end{enumerate}
  The only non trivial step is the one between 3 and 4, and this
  one is obtained by Herbrand's Theorem.
\end{proof}



\begin{lemma}
  Let $\K$ be a non-trivial class of \MTL-chains, and let $\varphi$ be
  $\forall x_1 \ldots \forall x_n \ \psi(x_1, \ldots, x_n)$ where $\psi$
  is a lattice combination of literals. The following are equivalent:
  \begin{itemize}
    \item[(1)] $\varphi \in {\rm TAUT}_0(\alg{B}_2)$.
    \item[(2)] $\varphi^\star \in {\rm TAUT}_0(\K)$.
    \item[(3)] $\varphi^\star \in {\rm TAUT}_{< 1}(\K)$.
  \end{itemize}
\end{lemma}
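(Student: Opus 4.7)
The plan is to prove the cycle $(1) \Rightarrow (2) \Rightarrow (3) \Rightarrow (1)$. The step $(2) \Rightarrow (3)$ is immediate because every \MTL-chain in $\K$ satisfies $\overline{0} < \overline{1}$. The real work lies in the other two implications, and the main idea is that the Dual Herbrand Theorem reduces the first-order question to the propositional one already settled by Lemma~\ref{l:propositional-lemma}.

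For $(1) \Rightarrow (2)$, I would apply the Dual Herbrand Theorem to $\varphi$ to obtain closed terms $\{t_1^i,\ldots,t_n^i : i = 1,\ldots,m\}$ such that $\chi \coloneqq \bigwedge_{i=1}^m \psi(t_1^i,\ldots,t_n^i)$ is a classical propositional contradiction. Since $\chi$ is still a lattice combination of literals (now built from closed atomic formulas), Lemma~\ref{l:propositional-lemma} gives $\chi^\star \in {\rm TAUT}_0(\K)$. A routine induction on formula structure shows that $(-)^\star$ commutes with substitution of closed terms and with conjunction, so $\chi^\star = \bigwedge_{i=1}^m \psi^\star(t_1^i,\ldots,t_n^i)$. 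Now, for any $\alg{A} \in \K$ and any $\alg{A}$-structure $\model{M}$, the universal quantifier is interpreted as an infimum, hence $\|\varphi^\star\|^\alg{A}_\model{M} \leq \|\psi^\star(t_1^i,\ldots,t_n^i)\|^\alg{A}_\model{M}$ for each $i$; taking the meet over $i$ bounds $\|\varphi^\star\|^\alg{A}_\model{M}$ above by $\|\chi^\star\|^\alg{A}_\model{M} = \overline{0}^\alg{A}$.

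For $(3) \Rightarrow (1)$, I would argue by contraposition. If $\varphi$ is not a classical contradiction, fix a Boolean interpretation in which $\|\varphi\| = 1$. Using the non-triviality of $\K$, pick some $\alg{A} \in \K$ with at least two elements; then $\{\overline{0}^\alg{A}, \overline{1}^\alg{A}\}$ is a two-element subalgebra of $\alg{A}$ isomorphic to $\alg{B}_2$. Reinterpret the Boolean model as an $\alg{A}$-structure $\model{M}$ whose atomic closed formulas take values only in this subalgebra. Since $x^2 = x$ on $\overline{0}^\alg{A}$ and $\overline{1}^\alg{A}$, and since the lattice and quantifier operations restricted to $\{\overline{0}^\alg{A}, \overline{1}^\alg{A}\}$ behave classically, an induction on the structure of $\varphi$ yields $\|\varphi^\star\|^\alg{A}_\model{M} = \overline{1}^\alg{A}$, contradicting (3). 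No serious obstacle is anticipated: the proof is essentially a careful chaining of Lemma~\ref{l:propositional-lemma} with the Dual Herbrand Theorem, together with the standard observation that the squaring translation is inert on classical two-valued assignments.
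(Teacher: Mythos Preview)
Your proposal is correct and follows essentially the same approach as the paper: the paper only spells out $(1)\Rightarrow(2)$, using precisely the Dual Herbrand Theorem together with Lemma~\ref{l:propositional-lemma} and the commutation of $(-)^\star$ with $\land$, and declares the remaining implications trivial. Your additional detail for $(3)\Rightarrow(1)$ via the two-element subalgebra is exactly the argument behind the ``trivial'' direction (mirroring the contrapositive step already used in Lemma~\ref{l:propositional-lemma}).
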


\begin{proof}
The only non trivial implication is $(1) \To (2)$. Suppose that
$\varphi$ is a classical contradiction. By the dual Herbrand's Theorem,
there are closed terms $t_j^i$ such that $\bigwedge_{i=1}^m \psi(t_1^i,
\ldots, t_n^i)$ is a classical propositional contradiction. By
Lemma~\ref{l:propositional-lemma}, recalling that $^\star$ commutes with
$\land$, we have that $\bigwedge_{i=1}^m \psi^\star(t_1^i, \ldots,
t_n^i) \in {\rm TAUT}_0(\K)$. Therefore, $\varphi^\star =
\forall x_1 \ldots \forall x_n \ \psi^\star(x_1, \ldots, x_n) \in {\rm
TAUT}_0(\K)$.
\end{proof}

\begin{lemma}
The set of classical purely universal first-order contradictions is
$\Sigma_1$-hard.
\end{lemma}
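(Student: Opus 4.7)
The plan is to reduce from a known $\Sigma_1$-complete set. The natural choice is the set of unsatisfiable classical first-order sentences (equivalently, by negation, the set of classical first-order tautologies), which is $\Sigma_1$-complete by the combination of G\"odel's completeness theorem and the Church--Turing undecidability of classical first-order logic. The standard tool for carrying the reduction into the purely universal fragment is classical Skolemization.

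Concretely, given an arbitrary classical sentence $\varphi$, I would effectively compute its Skolem normal form $\varphi^{\mathrm{Sk}}$ by first putting $\varphi$ in prenex normal form and then replacing every existentially quantified variable $\exists y$ sitting in the scope of universal quantifiers $\forall x_1, \ldots, \forall x_k$ by a term $f_y(x_1, \ldots, x_k)$, where $f_y$ is a fresh function symbol of arity $k$ (a fresh constant when $k=0$). The resulting sentence $\varphi^{\mathrm{Sk}}$ is purely universal, and by the usual argument -- choosing witnesses for the existential quantifiers of a model of $\varphi$ in order to interpret the fresh function symbols, and conversely reading off an $\exists$-witness from the Skolem term in a model of $\varphi^{\mathrm{Sk}}$ -- one has that $\varphi$ is satisfiable if and only if $\varphi^{\mathrm{Sk}}$ is; equivalently, $\varphi$ is a classical contradiction if and only if $\varphi^{\mathrm{Sk}}$ is a purely universal classical contradiction. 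Since Skolemization is an effective syntactic transformation, the map $\varphi \mapsto \varphi^{\mathrm{Sk}}$ is a computable many-one reduction from the $\Sigma_1$-complete set of classical contradictions to the set of classical purely universal contradictions, yielding the desired $\Sigma_1$-hardness.

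I expect no substantial obstacle; the argument is entirely standard. The only delicate point -- and the one that must be flagged because it meshes with the discussion earlier in the section -- is that Skolemization crucially requires the availability of fresh function symbols of arbitrary arity in the vocabulary. This is exactly why the proof naturally lives in the full vocabulary considered in \cite{MoNo10}, and it explains why, in the full predicate vocabulary (no function symbols), a different reduction would be needed and H\'ajek's problem remains open in that setting.
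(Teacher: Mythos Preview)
Your proposal is correct and essentially identical to the paper's own proof: the paper also reduces from the $\Sigma_1$-hard set of all classical contradictions via the Skolem/Herbrand machinery, phrasing it as ``$\varphi$ is a contradiction iff $\neg\varphi$ is a tautology iff the Herbrand (purely existential) form $(\neg\varphi)^H$ is a tautology iff $\neg(\neg\varphi)^H$ is a purely universal contradiction,'' which is just the dual formulation of your direct Skolemization $\varphi \mapsto \varphi^{\mathrm{Sk}}$. Your closing remark about the essential use of function symbols is also exactly the point the paper stresses immediately after this lemma.
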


\begin{proof}
First observe that the set all contradictions is $\Sigma_1$-hard.
Indeed, the set of all tautologies is $\Pi_1$-hard and we have that for
any sentence $\varphi$, $\varphi$ is a contradiction iff $\neg \varphi$
is a tautology.  Now given any sentence $\varphi$ we can write the
following chain of equivalencies: $\varphi$ is a contradiction iff $\neg
\varphi$ is a tautology iff its Herbrand form (purely existential)
$(\neg \varphi)^H$ is a tautology iff $\neg (\neg \varphi)^H$ is a
contradiction. The latter is a purely universal form, so we are done.
\end{proof}

\begin{theorem}
  \label{t:TAUTareSigma1hard}
  Let $\K$ be a non-trivial class of \MTL-chains. The set ${\rm
  TAUT}_0(\K)$ is $\Sigma_1$-hard and thus ${\rm SAT}_\mathrm{pos}(\K)$ is
  $\Pi_1$-hard.
\end{theorem}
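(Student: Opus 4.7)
The plan is to assemble the theorem directly from the lemmas already established. Concretely, I would define the reduction map $\varphi \mapsto \varphi^{\star}$ restricted to purely universal sentences of the form $\forall x_1 \ldots \forall x_n\, \psi(x_1, \ldots, x_n)$ with $\psi$ a lattice combination of literals. This map is clearly computable (it just recursively replaces each literal $\alpha$ by $\alpha \& \alpha$). By the previous lemma characterizing such purely universal sentences, $\varphi$ is a classical contradiction if and only if $\varphi^{\star} \in {\rm TAUT}_0(\K)$. Composing this reduction with the $\Sigma_1$-hardness of the set of classical purely universal first-order contradictions (the preceding lemma) yields at once that ${\rm TAUT}_0(\K)$ is $\Sigma_1$-hard.

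For the second conclusion, I would observe that in any \MTL-chain the order is linear and $\overline{0}^{\alg{A}}$ is the minimum element, so for any sentence $\varphi$, any $\alg{A} \in \K$ and any $\alg{A}$-structure $\model{M}$, the condition $\|\varphi\|^{\alg{A}}_{\model{M}} \neq \overline{0}^{\alg{A}}$ is equivalent to $\|\varphi\|^{\alg{A}}_{\model{M}} > \overline{0}^{\alg{A}}$. From this it follows immediately, unwinding the definitions of the two sets, that
\[
\varphi \in {\rm SAT}_{\mathrm{pos}}(\K) \quad\text{if and only if}\quad \varphi \notin {\rm TAUT}_0(\K).
\]
So ${\rm SAT}_{\mathrm{pos}}(\K)$ is the complement of ${\rm TAUT}_0(\K)$ inside $\Sent$. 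Since the complement of a $\Sigma_1$-hard set (with respect to many-one reductions on a recursive ambient set) is $\Pi_1$-hard, this gives $\Pi_1$-hardness of ${\rm SAT}_{\mathrm{pos}}(\K)$.

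There is no substantial obstacle: all the technical work (the crucial identity $a^2 \land (\neg a)^2 = 0$ in \MTL-chains, propagation through lattice combinations, and the passage from the classical to the fuzzy setting via dual Herbrand) has been isolated in the preceding lemmas. The only point that deserves a line of justification is the chain-based equivalence between ``$\neq \overline{0}$'' and ``$> \overline{0}$'', which is where the assumption that $\K$ consists of \MTL-\emph{chains} is used in the final step.
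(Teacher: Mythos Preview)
Your proposal is correct and matches the paper's own proof essentially line for line: the paper also derives the result directly from the two preceding lemmata (the $\Sigma_1$-hardness of classical purely universal contradictions and the equivalence $\varphi \in {\rm TAUT}_0(\alg{B}_2) \Leftrightarrow \varphi^\star \in {\rm TAUT}_0(\K)$), and then concludes by observing that ${\rm SAT}_{\mathrm{pos}}(\K)$ is the complement of ${\rm TAUT}_0(\K)$. Your extra remarks on computability of $\varphi \mapsto \varphi^\star$ and on why ``$\neq \overline{0}$'' coincides with ``$> \overline{0}$'' in a chain simply spell out details the paper leaves implicit.
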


\begin{proof}
It follows from the previous two lemmata and the fact that ${\rm
SAT}_\mathrm{pos}(\K)$ is the complementary set of ${\rm TAUT}_0(\K)$.
\end{proof}

To finish this section we point out that this proof, and the same for
the proof in \cite[Theorem~3.15]{MoNo10}, does not work for the full
predicate vocabulary. The reason is that in this vocabulary the set of
purely universal contradictions is indeed decidable. This is a
particular case of the decidability of the satisfiability problem (in
the classical setting) for relational (i.e., without functional symbols)
$\exists^* \forall^*$-sentences without equality. This decidability
result was proved long ago by Bernays and Sch\"onfinkel (the reader
interested on this topic can find more details in
\cite[Section~6.2.2]{BoGrGu97}).

\section{The Second Open Problem}
\label{sec:problem2}


Rutledge proved in \cite{Ru60} that the set of $1$-tautologies
over the standard \MV-chain coincide with the intersection of the sets of
$1$-tautologies over finite \MV-chains.

The fact that concerning $1$-satisfiability we can distinguish the
standard \MV-chain from the finite ones, can be obtained using the
sentence
\[
\Phi \coloneqq \quad  \exists x (P(x)\toto \neg P(x)) \: \& \: \forall x \exists y (P(x)\leftrightarrow
(P(y)\& P(y))),
\]
which was already considered in \cite[Lemma~4]{Ha02a}. It is quite
simple to check that $\Phi$ is $1$-satisfiable in some structure over
the standard \MV-chain, while it cannot be $1$-satisfiable in structures
over a finite \MV-chain. In other words, $\Phi \in {\rm
stSAT}_{1}({\L\forall})$ while $\Phi \not \in {\rm
finSAT}_{1}({\L\forall})$. An immediate corollary of this fact is that
$\neg \Phi \in {\rm finTAUT}_\mathrm{pos}({\L\forall})$, while $\neg \Phi \not
\in {\rm stTAUT}_\mathrm{pos}({\L\forall})$, which settles negatively the
second open problem stated above.


\begin{thebibliography}{1}

\bibitem{BoGrGu97}
E.~B{\"o}rger, E.~Gr{\"a}del, and Y.~Gurevich.
\newblock {\em The Classical Decision Problem}.
\newblock Perspectives in Mathematical Logic. Springer-Verlag, Berlin, 1997.

\bibitem{Ha02a}
P.~H{\'a}jek.
\newblock Monadic fuzzy predicate logics.
\newblock {\em Studia Logica}, 71(2):165--175, 2002.

\bibitem{Ha09b}
P.~H{\'a}jek.
\newblock Arithmetical complexity of fuzzy predicate logics -- {A} survey {II}.
\newblock {\em Annals of Pure and Applied Logic}, 161(2):212--219, 2009.

\bibitem{HaMoNo11}
P.~H{\'a}jek, F.~Montagna, and C.~Noguera.
\newblock Chapter {XI}. {A}rithmetical complexity of first-order fuzzy logics.
\newblock In P.~Cintula, P.~H{\'a}jek, and C.~Noguera, editors, {\em Handbook
  of Mathematical Fuzzy Logic, {V}ol. 2}, Studies in Logic and the
  Foundations of Mathematics. College Publications, 2011.

\bibitem{MoNo10}
F.~Montagna and C.~Noguera.
\newblock Arithmetical complexity of first-order predicate fuzzy logics over
  distinguished semantics.
\newblock {\em Journal of Logic and Computation}, 20(2):399--424, 2010.

\bibitem{Ru60}
J.~D. Rutledge.
\newblock On the definition of an infinitely-many-valued predicate calculus.
\newblock {\em The Journal of Symbolic Logic}, 25:212--216, 1960.

\end{thebibliography}
\end{document}